\documentclass[11pt]{amsproc}

\usepackage{amsmath, amsthm, amssymb, amsfonts, amscd}

\def\AA{{\mathbb A}}

\def\FF{{\mathbb F}}

\def\NN{{\mathbb N}}

\def\QQ{{\mathbb Q}}
\def\PP{{\mathbb P}}
\def\QQ{{\mathbb Q}}
\def\RR{{\mathbb R}}

\def\pfr{{\mathfrak p}}

\def\afr{{\mathfrak a}}

\def\0{{\mathbf 0}}
\def\1{{\mathbf 1}}

\def\Xbf{{\mathbf X}}

\def\Mcal{{\mathcal M}}

\def\triv{\mathrm{triv}}

\def\Spec{\mathrm{Spec}}

\def\supp{\mathrm{supp}}

\def\sup{\mathrm{sup}}

\theoremstyle{plain}

\newtheorem*{PRT}{Poincar\'e Recurrence Theorem}

\newtheorem{thm}{Theorem}

\newtheorem{prop}[thm]{Proposition}

\theoremstyle{definition}

\newtheorem{rem}{Remark}
\newtheorem{ex}{Example}

\title[]{On the distribution of orbits in affine varieties}

\author{Clayton Petsche}

\address{Clayton Petsche; Department of Mathematics; Oregon State University; Corvallis OR 97331 U.S.A.}

\email{petschec@math.oregonstate.edu}

\date{First version January 13 2014, revised February 24 2014}
\keywords{Algebraic dynamical systems, ergodic theory, Berkovich analytic spaces, dynamical Mordell-Lang conjecture}
\subjclass[2010]{11S82, 37A05, 37P50}


\begin{document}

\begin{abstract}
Given an affine variety $X$, a morphism $\phi:X\to X$, a point $\alpha\in X$, and a Zariski closed subset $V$ of $X$, we show that the forward $\phi$-orbit of $\alpha$ meets $V$ in at most finitely many infinite arithmetic progressions, and the remaining points lie in a set of Banach density zero.  This may be viewed as a weak asymptotic version of the Dynamical Mordell-Lang Conjecture for affine varieties.  The results hold in arbitrary characteristic, and the proof uses methods of ergodic theory applied to compact Berkovich spaces.
\end{abstract}

\maketitle

\section{Introduction}

Let $X$ be an affine variety and let $\phi:X\to X$ be a morphism.  Let $\NN$ denote the set of nonnegative integers, and for each $n\in\NN$ write $\phi^n=\phi\circ\dots\circ\phi$ for the $n$-fold composition of $\phi$ with itself.  Given a point $\alpha\in X$, it is a fundamental question of algebraic dynamics to describe the distribution in $X$ of the orbit
\begin{equation}\label{VOrbit}
\alpha, \phi(\alpha), \phi^2(\alpha), \phi^3(\alpha), \dots
\end{equation}
of $\alpha$ with respect to iteration of $\phi$.  

An informal governing philosophy states that the orbit $(\ref{VOrbit})$ should distribute as generically as possible, except for the possible existence of trivial obstructions.  One way to make this idea more precise is to consider an aribtrary Zariski-closed subset $V$ of $X$, and to ask how often along the orbit $(\ref{VOrbit})$ does it occur that $\phi^n(\alpha)\in V$?  It may happen that $\phi^n(\alpha)\in V$ for many integers $n$, but for the trivial reason that $\phi^b(\alpha)\in V_0\subseteq V$ for some integer $b\geq0$ and some $\phi$-periodic subvariety $V_0$ of $V$; we say $V_0$ is {\em $\phi$-periodic} if there exists an integer $a\geq1$ such that $\phi^a(V_0)\subseteq V_0$.  In this case, we must have $\phi^n(\alpha)\in V$ for all $n$ along the the infinite arithmetic progression $a\NN+b=\{a\ell+b\mid \ell\in\NN\}$.  But excluding this possibility, it should be a rare occurence for $V$ to contain $\phi^n(\alpha)$. 

\begin{thm}\label{MainThmIntro}
Let $X$ be an affine variety, let $\phi: X\to X$ be a morphism, let $\alpha\in X$ be a point, and let $V$ be a Zariski-closed subset of $X$.  If the set $\{n\in\NN\mid\phi^n(\alpha)\in V\}$ contains no infinite arithmetic progressions, then it has Banach density zero.
\end{thm}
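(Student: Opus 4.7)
Argue by contrapositive: suppose $S$ has positive upper Banach density $\delta > 0$, and exhibit an infinite arithmetic progression inside $S$.  By the observation in the introduction, this amounts to producing integers $a \geq 1$ and $b \geq 0$ together with a $\phi^a$-invariant Zariski-closed subset $V_0 \subseteq V$ containing $\phi^b(\alpha)$.

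\emph{Berkovich compactification and invariant measure.}  Spread $(X,\phi,\alpha,V)$ over a finitely generated subring of the base field, complete at a maximal ideal to obtain a complete non-Archimedean field $K$, analytify, and embed in the Berkovich analytification of a projective closure to obtain a compact Hausdorff space $\hat X$ carrying a continuous extension $\hat\phi:\hat X\to\hat X$, a closed subset $\hat V$, and the image $\bar\alpha$ of $\alpha$, such that $S = \{n : \hat\phi^n(\bar\alpha)\in\hat V\}$.  Choose windows $[b_k, b_k+N_k)$ with $N_k\to\infty$ and $|S\cap[b_k,b_k+N_k)|/N_k\to\delta$, and take a weak-$*$ subsequential limit $\mu$ of the empirical measures
\[
\mu_k \;=\; \frac{1}{N_k}\sum_{n=0}^{N_k-1}\delta_{\hat\phi^{b_k+n}(\bar\alpha)}.
\]
A standard telescoping argument forces $\hat\phi_*\mu=\mu$, and upper semicontinuity of $\nu\mapsto\nu(\hat V)$ on closed sets gives $\mu(\hat V)\geq\delta>0$.

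\emph{Recurrence and Noetherian descent.}  Apply the Poincar\'e Recurrence Theorem to $(\hat X,\hat\phi,\mu)$: for $\mu$-a.e.\ $x\in\hat V$, the orbit visits $\hat V$ infinitely often.  The crux, and the step I expect to be the main obstacle, is to upgrade this $\mu$-generic recurrence to a statement about an actual forward iterate of $\alpha$.  For each $a\geq 1$ the descending chain of Zariski-closed sets $\bigcap_{\ell=0}^m \phi^{-a\ell}(V)$ stabilizes by Noetherianity at a $\phi^a$-invariant set $V_\infty^{(a)}\subseteq V$, and the task reduces to showing $\phi^b(\alpha)\in V_\infty^{(a)}$ for some $a,b$.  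My approach is to let $Z$ be the Zariski closure of $\mathrm{supp}(\mu)$ in $X$; since $\mathrm{supp}(\mu)$ is $\hat\phi$-invariant and $\phi$ is a morphism, $\phi(Z)\subseteq Z$, so $\phi$ permutes the finitely many irreducible components of $Z$ and a suitable power $\phi^a$ fixes each one.  Poincar\'e recurrence inside each component $Z_i$ meeting $\hat V$, combined with Noetherian descent along the chain of $\phi^a$-invariant Zariski-closed subsets of $V\cap Z_i$, should pin the recurrence down to $V_\infty^{(a)}\cap Z_i$.  The remaining delicate point is to unwind the empirical construction of $\mu$ and locate an actual iterate $\phi^b(\alpha)$ in this set — rather than a mere Berkovich limit point of the orbit — thereby producing the desired arithmetic progression $b + a\NN\subseteq S$.
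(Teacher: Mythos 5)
Your high-level outline — compactify into a Berkovich space, push empirical orbit measures to a $\phi$-invariant weak-$*$ limit $\mu$ charging the lift of $V$, then invoke Furstenberg's topological Poincar\'e recurrence — matches the paper's strategy exactly up through the recurrence step.  But the proposal diverges in two ways, one of which is a genuine gap that you yourself flag.

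First, the choice of Berkovich space.  You propose to spread $(X,\phi,\alpha,V)$ out over a finitely generated $\ZZ$-subalgebra, complete at a maximal ideal to obtain a nontrivially-valued complete field $K$, and analytify a projective closure.  The paper instead works with the trivially valued Berkovich spectrum $\Mcal(A)$ of the coordinate ring $A$ of $X$ (after a harmless descent to a countable base field so that $\Mcal(A)$ is separable metrizable).  This is not merely a cosmetic difference: the reduction map of $\Mcal(A)$ lands in $\Spec(A)=X$ itself, whereas the reduction map in your construction lands in the \emph{special fiber} of your chosen model, which is a different scheme over a different (typically finite or residue) field.  A $\phi^a$-periodic subvariety produced in that special fiber is not a subvariety of $X$, so even if you completed the argument you would not immediately get the conclusion stated.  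There are also subsidiary worries about whether $S=\{n:\hat\phi^n(\bar\alpha)\in\hat V\}$ survives the specialization; the trivially-valued spectrum sidesteps all of this because $\pi\circ\triv=\mathrm{id}_{\Spec A}$.

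Second, and more seriously, the step you call ``the crux'' — upgrading $\mu$-a.e.\ recurrence to a statement about an actual forward iterate $\phi^b(\alpha)$ landing in a $\phi^a$-periodic subvariety of $V$ — is left as a plan with no mechanism, and the Noetherian-descent idea ($\bigcap_{\ell\le m}\phi^{-a\ell}(V)$ stabilizing, taking the Zariski closure of $\supp\mu$, etc.) does not by itself connect $\mu$ to the true orbit.  The paper's resolution is short and hinges on \emph{anti-continuity} of the reduction map $\pi:\Mcal(A)\to\Spec(A)$ for Noetherian $A$: the preimage of a Zariski-closed set is \emph{open}.  Taking a $T$-recurrent point $\zeta\in\pi^{-1}(V)\cap\supp(\mu)$, the set $\pi^{-1}(\overline{\{\pi(\zeta)\}})$ is an open neighborhood of $\zeta$.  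Recurrence gives $T^a(\zeta)$ in it for some $a\ge1$, hence $\phi^a(\pi(\zeta))\in\overline{\{\pi(\zeta)\}}$, so $\overline{\{\pi(\zeta)\}}$ is $\phi^a$-invariant.  Meanwhile $\zeta\in\supp(\mu)$ plus the Portmanteau inequality $\liminf_m\mu_m(U)\ge\mu(U)$ forces $\zeta$ to be a limit point of the actual orbit $\{T^n(\triv(\alpha))\}$, so some $T^b(\triv(\alpha))$ lies in the same open neighborhood, giving $\phi^b(\alpha)\in\overline{\{\pi(\zeta)\}}\subseteq V$ and hence $\phi^{a\ell+b}(\alpha)\in V$ for all $\ell\in\NN$.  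This is the missing link: the open-ness of $\pi^{-1}$ of a Zariski-closed set is what transfers the Berkovich limit point back into a concrete iterate lying in a genuine subvariety.  Without building this anti-continuity into your compactification, the proposal does not close.
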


\begin{thm}\label{MainCorIntro}
Let $X$ be an affine variety, let $\phi: X\to X$ be a morphism, let $\alpha\in X$ be a point, and let $V$ be a Zariski-closed subset of $X$.  Then $\{n\in\NN\mid\phi^n(\alpha)\in V\}=A\cup B$, where $A$ is a (possibly empty) finite union of infinite arithmetic progressions, and $B$ is a set of Banach density zero.
\end{thm}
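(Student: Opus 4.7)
Setting $S := \{n \in \NN : \phi^n(\alpha) \in V\}$, the plan is to deduce Theorem \ref{MainCorIntro} from Theorem \ref{MainThmIntro} by letting $A$ be the union of all infinite arithmetic progressions contained in $S$, and $B := S \setminus A$. Two claims then remain: (i) $A$ is itself a finite union of infinite APs, and (ii) $B$ has Banach density zero.

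For (i), which is the main obstacle, I would associate to each infinite AP $a\NN + b \subseteq S$ the Zariski-closed, $\phi^a$-invariant subvariety $W_{a,b} := \overline{\{\phi^{ak+b}(\alpha) : k \in \NN\}} \subseteq V$. Note that refining an AP (replacing $a\NN + b$ by $a'\NN + b'$ with $a \mid a'$ and $b' \equiv b \pmod{a}$) shrinks the associated subvariety: $W_{a',b'} \subseteq W_{a,b}$. The plan is then to exploit the structure of the orbit closure $Z := \overline{\{\phi^n(\alpha)\}_{n \geq 0}}$: after passing to a tail of the orbit, $\phi$ permutes the irreducible components $Y_1, \ldots, Y_s$ of $Z$ cyclically with some period $d$, and $\phi^d$ restricts to a dominant morphism on each $Y_i$. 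The orbit of $\alpha$ then enters each $Y_i$ along a single residue class $r_i \pmod{d}$. Components with $Y_i \subseteq V$ contribute an explicit AP $r_i + d\NN$ to $A$; components with $V \cap Y_i \subsetneq Y_i$ are handled by a descent, either by induction on dimension or by iterating the analysis on the smaller $\phi^d$-invariant subvariety, with the descending chain condition on closed subvarieties of $V$ ensuring that only finitely many APs arise in total.

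For (ii), write $A = \bigcup_{i=1}^{r}(a_i\NN + b_i)$, set $N := \mathrm{lcm}(a_1,\ldots,a_r)$, and define $C := \{c \in \{0,\ldots,N-1\} : c \equiv b_i \pmod{a_i} \text{ for some } i\}$; then $A$ agrees with $\bigcup_{c \in C}(c + N\NN)$ off a finite subset of $\NN$. For each $c \in \{0,\ldots,N-1\}$, let $T_c := \{k \in \NN : \phi^{Nk+c}(\alpha) \in V\}$, the return set for the triple $(\phi^N, \phi^c(\alpha), V)$. For $c \notin C$, any infinite AP $m\NN + j \subseteq T_c$ would lift to the AP $(Nm)\NN + (Nj+c) \subseteq S$, which by the definition of $A$ must be contained in $A$, forcing $c \equiv b_i \pmod{a_i}$ for some $i$ and hence $c \in C$---a contradiction. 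Thus $T_c$ contains no infinite AP, and Theorem \ref{MainThmIntro} applied to $(\phi^N,\phi^c(\alpha),V)$ gives that $T_c$, and hence $c + NT_c$, has Banach density zero. Since $B$ is the disjoint union over $c \notin C$ of such sets, together with a finite set, it has Banach density zero.

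The crux of the argument is (i): showing that the union of every infinite AP sitting in $S$ is itself a finite union of APs. Step (ii) is then a clean application of Theorem \ref{MainThmIntro} together with residue-class bookkeeping.
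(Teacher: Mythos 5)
Your step (ii) is a clean and correct reduction: given that $A$, the union of all infinite APs contained in $S$, is itself a finite union of APs, your residue-class bookkeeping combined with Theorem~\ref{MainThmIntro} shows $B = S\setminus A$ has Banach density zero. That part works. But the burden of the proof is entirely concentrated in (i), and there you have only a sketch, with several unresolved gaps.

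First, the structural claim about $Z=\overline{\{\phi^n(\alpha)\}}$ needs more care than you give it. After passing to a tail $Z'=\overline{\{\phi^n(\alpha):n\geq N\}}$ (stable by Noetherianity), $\phi$ does induce a permutation of the irreducible components, and one can argue it is a single cycle because the tail of the orbit must meet every component; but this requires proof, and the statement that the orbit enters $Y_i$ along ``a single residue class'' is not literally correct --- $\phi^n(\alpha)$ can lie in $Y_i$ for other residues $n$ as well, via the intersections $Y_i\cap Y_j$. Second, and more seriously, the descent step for components with $V\cap Y_i\subsetneq Y_i$ is not well-founded as stated. You have $\dim(V\cap Y_i)<\dim Y_i$, but $\dim Y_i$ need bear no relation to $\dim V$, so an induction ``on dimension'' has no clear parameter. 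The paper avoids exactly this issue by first reducing to irreducible $V$ and inducting on $\dim V$; your sketch never reduces to the irreducible case, and it is not clear the descent terminates. Third, the DCC on closed subvarieties controls a single descending chain, but $A$ is a union over \emph{all} infinite APs in $S$, not over a single chain; you would need an additional argument that only finitely many essentially distinct APs arise. Finally, note that (i) as literally stated --- that the union of \emph{all} infinite APs in $S$ is a finite union of APs --- is itself a consequence of the theorem but is not the most economical target; the theorem only asks for \emph{some} decomposition $S=A\cup B$.

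For comparison, the paper's proof of Theorem~\ref{MainCorIntro} proceeds quite differently. It inducts on $\dim V$, first reducing to $V$ irreducible. If $S$ contains no infinite AP one is done by Theorem~\ref{MainThmIntro}. Otherwise one fixes a single AP $a\NN+b\subseteq S$ and defines, for each residue $j$, the orbit-closure $W_j=\overline{\{\phi^n(\alpha):n\in a\NN+j\}}$. The maps $\phi: W_j\to W_{j+1}$ are dominant, so $\dim W_j$ stabilizes. One then splits into two cases according to whether $\dim W_b$ equals $\dim V$. If $\dim W_b<\dim V$, each piece $S_j=\{n\in a\NN+j:\phi^n(\alpha)\in V\cap W_j\}$ is handled by the induction hypothesis since $\dim(V\cap W_j)<\dim V$. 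If $\dim W_b=\dim V$, irreducibility of $V$ forces $W_b=V$, and for each $j$ either $W_j=V$ (so $S_j=a\NN+j$, an AP) or $\dim(V\cap W_j)<\dim V$ (induction again). The crucial features that make this clean --- reducing to irreducible $V$, inducting on $\dim V$ itself, and working with the residue-class closures $W_j$ attached to a single AP $a\NN+b$ rather than the full orbit closure $Z$ --- are exactly what your sketch is missing. To salvage your plan, you should either adopt the paper's induction on $\dim V$ with irreducibility reduction, or carefully set up a well-founded descent and show how it yields finiteness of the full AP-structure of $S$.
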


To specify some of our terminology, by an affine variety we mean $\Spec(A)$ for a finitely generated $k$-algebra $A$ over an arbitrary field $k$.  The point $\alpha$ in the statements of Theorems~\ref{MainThmIntro} and \ref{MainCorIntro} is not assumed to be a closed point.

Given a subset $S\subseteq \NN$, the {\em Banach density} of $S$ is defined to be 
\begin{equation*}
d(S)=\limsup_{|I|\to+\infty}\frac{|S\cap I|}{|I|}
\end{equation*}
the limit supremum taken over all intervals $I$ in $\NN$ with $|I|\to+\infty$.  If $S$ has Banach density zero, then it necessarily has {\em ordinary} density zero in the sense that $\frac{1}{n}|S\cap[0,n-1]|\to0$ as $n\to+\infty$.  Simple examples\footnote{$S=\cup_{n\geq1}\{n^3,n^3+1,\dots,n^3+n\}$} show that the converse is false in general, and so one may view a set having Banach density zero as ``thinner'' than an arbitrary set of ordinary density zero.

Since infinite arithmetic progressions have positive Banach density, Theorem~\ref{MainThmIntro} is formally weaker than Theorem~\ref{MainCorIntro}.  However, Theorem~\ref{MainThmIntro} is the main result of this paper and is proved independently.  Theorem~\ref{MainCorIntro} is proved using Theorem~\ref{MainThmIntro} and a bootstrapping argument via induction on the dimension of $V$.  The main idea behind this argument is not new; similar arguments are used, for example, in Ghioca-Tucker \cite{MR2521481} and Bell-Ghioca-Tucker \cite{MR2766180}. 

Our proof of Theorem~\ref{MainThmIntro} uses methods from topological dynamics and ergodic theory, and our primary tool is a strong topological version of the Poincar\'e Recurrence Theorem due to Furstenberg \cite{MR628658}.  In order to apply this approach, we lift the dynamical system $\phi:X\to X$ to a compact Berkovich space $\Xbf$ via a surjective, anti-continuous reduction map $\pi:\Xbf\to X$.  If the set $\{n\in\NN\mid\phi^n(\alpha)\in V\}$ contains many points, then we can produce a measure-preserving dynamical system on $\Xbf$ which positively charges $\pi^{-1}(V)$; the existence of a suitable recurrent point associated to this dynamical system leads to the existence of the desired periodic subvariety of $V$.

While preparing this article we learned that two groups of researchers have independently proved Theorem~\ref{MainCorIntro} for arbitrary Zariski spaces $X$, which includes all algebraic varieties.  First, Gignac uses a study of invariant measures and ergodic theory directly on the Zariski topology of $X$ and does not pass to Berkovich spaces.  The necessary theory of Borel measures on Zariski spaces is built in Gignac \cite{GignacPaper}, and in the same paper he derives a new measure-theoretic proof of a dynamical result of Favre (\cite{FavreThesis} Thm. 2.5.8).  The generalization of Theorem~\ref{MainCorIntro} uses this result of Favre, and is alluded to in Gignac's thesis \cite{GignacThesis}, but the proof has not been published.  Independently, Bell-Ghioca-Tucker \cite{BellGhiocaTucker2014} have given a proof of Theorem~\ref{MainCorIntro} for Zariski spaces, using an elementary approach which leads to interesting quantitative results.

In characteristic zero it has been conjectured that Theorems~\ref{MainThmIntro} and \ref{MainCorIntro} are true (for arbitrary quasiprojective varieties) in the strengthened form in which the Banach density zero statements are improved to finiteness statements.  This is known as the Dynamical Mordell-Lang (DML) Conjecture, and was stated at this level of generality by Ghioca-Tucker \cite{MR2521481}, following less general statements by Denis \cite{MR1259107} and Bell \cite{MR2225492}.  While the DML Conjecture remains open at present, a great deal of progress has been made in special cases; a non-exhaustive list would include the articles of Denis, Bell, and Ghioca-Tucker already cited, as well as Bell-Ghioca-Tucker \cite{MR2766180}, and Ghioca-Tucker-Zieve \cite{MR2367026}.  
 
Many proofs of special cases of the standard characteristic zero DML Conjecture use variations on the Skolem-Mahler-Lech method of $p$-adic analysis, often combined with auxiliary results from arithmetic geometry and number theory.  However, the Skolem-Mahler-Lech method breaks down in characteristic $p$ due to the lack of a suitable $p$-adic logarithm.  This is illustrated by the following example which was pointed out to us by Tom Tucker.

\begin{ex}\label{CharpEx}
Let $k=\FF_p(t)$ denote the field of rational functions over the finite field $\FF_p$, define the map $\phi:\AA^2\to\AA^2$ over $k$ by $\phi(x,y)=(tx,(1-t)y)$, and consider the point $\alpha=(1,1)$ and the line $V=\{x+y=1\}$ in $\AA^2$.  Then $\phi^n(\alpha)=(t^n,(1-t)^n)$ and $\{n\in\NN\mid\phi^n(\alpha)\in V\}=\{p^\ell\mid \ell\geq0\}$, a set which is not a union of finitely many infinite arithmetic progressions with finitely many integers.
\end{ex}

Example~\ref{CharpEx} shows that in characteristic $p$, Theorems~\ref{MainThmIntro} and \ref{MainCorIntro} cannot be proved in the strengthened form in which the Banach density zero statements are improved to finiteness statements.  In other words, no method which is insensitive to the characteristic --- and our approach to the proofs of Theorems~\ref{MainThmIntro} and \ref{MainCorIntro} is one such method --- could possibly produce a full proof of the standard characteristic zero DML Conjecture. 

Interestingly, Denis \cite{MR1259107} also proved a Banach density zero result in the setting of an automorphism of $\PP^n$ in characteristic $p$.  In place of the Skolem-Mahler-Lech method, he used a result of B\'ezivin \cite{MR919425} on recurrence sequences in characteristic $p$.  B\'ezivin's theorem, in turn, relies on the well-known theorem of Szemer\'edi on arithmetic progressions occurring in subsets of $\NN$ with positive Banach density.  In view of Furstenberg's \cite{MR0498471} ergodic-theoretic proof of Szemer\'edi's theorem, there has been, at least indirectly, a connection between ergodic theory and the Dynamical Mordell-Lang problem for some time.  

I thank William Gignac and Tom Tucker for helpful conversations.  This research was supported in part by NSF grant DMS-0901147.


\section{A few facts from topological dynamics and ergodic theory}\label{TopSect}

In this section $S$ denotes a compact metric space.  Given a continuous map $T: S\to S$ and a unit Borel measure $\mu$ on $S$, one says that $\mu$ is {\em $T$-invariant} if $\mu(T^{-1}(E))=\mu(E)$ for all Borel subsets $E$ of $S$.  Alternatively, a standard argument shows that $\mu$ is {\em $T$-invariant} if and only if $\int(f\circ T)d\mu=\int fd\mu$ for all continuous functions $f: S \to\RR$ (\cite{MR648108} Thm. 6.8).  

A point $\alpha\in S $ is said to be {\em $T$-recurrent} if $\alpha$ is a limit point of the sequence $\{T^n(\alpha)\}$.  There are several categories of results in the literature whose purpose is to show the existence, or even plentitude, of recurrent points in various settings.  The main dynamical tool of this article is the following strong topological version of the Poincar\'e Recurrence Theorem due to Furstenberg; for the proof see \cite{MR628658} Thm. 1.1 and the three subsequent paragraphs.

\begin{PRT}
Let $ S $ be a separable compact metric space, let $T: S \to S $ be a continuous map, and let $\mu$ be a $T$-invariant unit Borel measure on $ S $.  Then $\mu$-almost all points of $ S $ are $T$-recurrent.  
\end{PRT}

A sequence $\{\mu_m\}$ of unit Borel measures on $S$ is said to {\em converge weakly} to a unit Borel measure $\mu$ on $S$ if $\int fd\mu_m\to\int fd\mu$ for all continuous functions $f: S \to\RR$. 

\begin{rem}\label{PortRemark}
The Portmanteau theorem gives several criteria for weak convergence of unit Borel measures on metric spaces.  We will use one such result, namely that $\{\mu_m\}$ converges weakly to $\mu$ if and only if $\liminf\mu_m(U)\geq\mu(U)$ for all open subsets $U$ of $S$ (\cite{MR1700749} Thm. 2.1).
\end{rem}


\section{The Berkovich spectrum of a trivially-valued $k$-algebra}\label{BerkSpectSect}

Let $k$ be a field.  In this section we summarize the necessary facts about the Berkovich spectrum of a trivially-valued $k$-algebra.  Most of the facts stated in this section are due to Berkovich himself in the more general setting of arbitrary Banach rings; other facts presented here seem to be well-known, but we supply proofs for lack of suitable references.

Let $A$ be a $k$-algebra.  A {\em bounded multiplicative seminorm} on $A$ is a function $[\cdot]:A\to \RR$ satisfying
\begin{itemize}
\item[(i)] $0\leq[f]\leq1$ for all $f\in A$;
\item[(ii)] $[\cdot]$ restricts to the trivial absolute value on $k$;
\item[(iii)] $[f+g]\leq\max\{[f],[g]\}$ for all $f,g\in A$;
\item[(iv)] $[fg]=[f][g]$ for all $f,g\in A$.
\end{itemize}

\begin{rem}
This definition is equivalent to Berkovich's definition \cite{MR1070709} of a bounded multiplicative seminorm when $A$ is viewed as a Banach ring with respect to the the trivial norm $[\cdot]_0$.  In place of our conditions (i) and (ii), Berkovich requires only that $[1]=1$ and that $0\leq[f]\leq C[f]_0$ for some $C>0$.  Along with condition (iv) this implies that $[f]^n=[f^n]\leq C$ for each nonzero $f\in A$, whereby $[f]\leq1$ upon taking $n\to+\infty$.  $0\leq[0]\leq C[0]_0=0$, implying $[0]=0$, and if $a\in k^\times$ then $1=[1]=[aa^{-1}]=[a][a^{-1}]$ with both $[a]\leq1$ and $[a^{-1}]\leq1$, implying $[a]=1$.  Finally, Berkovich requires only the usual triangle inequality $[f+g]\leq [f]+[g]$, but the binomial theorem and the fact that $[\cdot]$ is trivially valued on $k$ implies that $[f+g]^n=[(f+g)^n]\leq \sum_{0\leq\ell\leq n}[f]^k[g]^{n-k}\leq(n+1)\max\{[f],[g]\}^n$.  Taking $n$-th roots and letting $n\to+\infty$ gives $[f+g]\leq\max\{[f],[g]\}$.
\end{rem}

The {\em Berkovich spectrum} of a $k$-algebra $A$ is defined to be the set $\Mcal(A)$ of all bounded multiplicative seminorms on $A$.  As a notational convention it is standard to refer to a point $\zeta\in\Mcal(A)$, and to denote by $[\cdot]_\zeta$ its corresponding seminorm on $A$.  The topology on $\Mcal(A)$ is defined to be the coarsest topology with respect to which the real-valued functions $\zeta\mapsto[f]_\zeta$ are continuous for all $f\in A$.  Equivalently, for each $f\in A$ and each pair $s<t$ of real numbers, define a subset of $\Mcal(A)$ by 
\begin{equation}\label{BerkBase}
U_{s,t}(f) = \{\zeta\in\Mcal(A) \mid s<[f]_\zeta<t\}.
\end{equation}
Then the collection of all nonempty finite intersections of sets of the form $U_{s,t}(f)$ forms a base of open sets for the topology on $\Mcal(A)$.

\begin{prop}\label{CompProp}
Let $A$ be a $k$-algebra.  Then $\Mcal(A)$ is a nonempty compact Hausdorff space.  If $A$ is countable then $\Mcal(A)$ is metrizable and separable.  
\end{prop}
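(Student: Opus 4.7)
The plan is to realize $\Mcal(A)$ as a closed subspace of the Tychonoff product $[0,1]^A=\prod_{f\in A}[0,1]$ via the evaluation map
\[\iota:\Mcal(A)\longrightarrow [0,1]^A,\qquad\iota(\zeta)=([f]_\zeta)_{f\in A},\]
which is injective because a seminorm is determined by its values, and is by construction a homeomorphism onto its image: the topology on $\Mcal(A)$ is defined as the coarsest making all evaluations $\zeta\mapsto[f]_\zeta$ continuous, which is exactly the subspace topology pulled back from the product topology. Once this identification is in hand, all four conclusions will flow from standard general topology.

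For nonemptiness, I would pick any maximal ideal $\mfr$ of $A$; such an $\mfr$ exists by Zorn's lemma since $A$ is nonzero (it is a $k$-algebra containing $1$). The residue field $A/\mfr$ is a field extension of $k$ (the composition $k\to A\to A/\mfr$ is injective, $k$ being a field), and pulling the trivial absolute value on $A/\mfr$ back through the surjection $A\twoheadrightarrow A/\mfr$ yields a bounded multiplicative seminorm satisfying axioms (i)--(iv); this exhibits a point of $\Mcal(A)$.

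The main technical step is verifying that $\iota(\Mcal(A))$ is closed in $[0,1]^A$. Each defining condition of a bounded multiplicative seminorm is a closed condition in finitely many coordinates: $\{(x_h):x_{fg}=x_fx_g\}$ and $\{(x_h):x_{f+g}\leq\max(x_f,x_g)\}$ are closed for every fixed $f,g\in A$ as preimages of closed sets under continuous coordinate projections; similarly $\{(x_h):x_a=1\}$ is closed for each $a\in k^\times$, and $\{(x_h):x_0=0\}$ is closed. Intersecting all these closed subsets over $f,g\in A$ and $a\in k^\times$ yields $\iota(\Mcal(A))$, which is therefore closed in the product. By Tychonoff's theorem $[0,1]^A$ is compact Hausdorff, hence so is its closed subspace $\Mcal(A)$.

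When $A$ is countable, enumerate $A=\{f_1,f_2,\dots\}$; the metric $d(x,y)=\sum_{n\geq 1}2^{-n}|x_{f_n}-y_{f_n}|$ metrizes the product topology on $[0,1]^A$, so the subspace $\Mcal(A)$ inherits a metric, and being compact it is automatically separable. There is no serious obstacle here: the argument is the standard template of presenting a space of functions satisfying pointwise axioms as a closed subspace of a Tychonoff product; the sole point demanding care is the closedness of each axiom, but all axioms are equalities or $\leq$-inequalities in continuous coordinate data, so closedness is routine.
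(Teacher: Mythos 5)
Your argument is correct, but it is structured rather differently from the paper's. For nonemptiness, compactness and Hausdorffness the paper simply cites Berkovich (\cite{MR1070709}, Thm.~1.2.1); you instead give a self-contained proof via the embedding $\iota:\Mcal(A)\hookrightarrow[0,1]^A$ as a closed subspace of a Tychonoff cube, which is essentially Berkovich's own argument made explicit (and your existence argument via a maximal ideal and the trivial residue norm is a fine variant of the trivial map used elsewhere in the paper). Where you genuinely diverge is in the countable case: the paper shows that $\Mcal(A)$ is second countable (rational endpoints $s,t\in\QQ$ and countably many $f\in A$ give a countable base of the $U_{s,t}(f)$), notes that a compact Hausdorff space is normal, and invokes the Urysohn Metrization Theorem, deducing separability from second countability. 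You instead write down the explicit metric $d(x,y)=\sum_n 2^{-n}|x_{f_n}-y_{f_n}|$ on $[0,1]^A$ (countable product of metric spaces) and get separability for free from compactness. Your route is more concrete and avoids Urysohn at the cost of leaning on the explicit realization inside $[0,1]^A$; the paper's route avoids constructing a metric by hand but uses heavier general-topology machinery. Both are complete; there is no gap in your proposal.

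One small point worth making explicit: you should note that $\iota(\Mcal(A))$ is cut out by the closure conditions together with the automatic constraint $0\le x_f\le 1$ (which is built into $[0,1]^A$), and that $x_1=1$ is subsumed by the condition $x_a=1$ for $a\in k^\times$; as stated you have accounted for all of (i)--(iv), so the intersection is exactly the image, as needed.
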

\begin{proof}
For the proofs that $\Mcal(A)$ is nonempty, compact, and Hausdorff, see \cite{MR1070709} Thm 1.2.1.  If $A$ is countable, then the collection of all nonempty finite intersections of sets of the form $U_{s,t}(f)$, for $s,t\in\QQ$ and $f\in A$, forms a countable base of open sets for the topology on $\Mcal(A)$; in other words, $\Mcal(A)$ is second countable.  Since $\Mcal(A)$ is a compact Hausdorff space, it is normal (\cite{MR1681462} Thm. 4.25), and the Urysohn Metrization Theorem (\cite{MR1681462} Thm. 4.58) states that a space is metrizable provided it is normal and second countable.  Finally, a second countable space is always separable (\cite{MR1681462} Thm. 4.5).
\end{proof}

The Berkovich spectrum $\Mcal(A)$ of a $k$-algebra $A$ comes equipped with the {\em reduction map}
\begin{equation}\label{ReductionDef}
\pi:\Mcal(A)\to\Spec(A) \hskip1cm \pi(\zeta)=\{f\in A\mid[f]_\zeta<1\}.
\end{equation}
That $\{f\in A\mid[f]_\zeta<1\}$ is a prime ideal of $A$ follows easily from the properties (i), (iii), and (iv) of the definition of a bounded multiplicative seminorm.

In the opposite direction, each prime ideal $\pfr$ of $A$ gives rise to a bounded multiplicative seminorm obtained from the quotient map $A\to A/\pfr$ followed by the trivial norm on the integral domain $A/\pfr$.  We call
\begin{equation}\label{TrivialDef}
\triv:\Spec(A)\to\Mcal(A) \hskip1cm [f]_{\triv(\pfr)}=\begin{cases} 0 & \text{ if } f\in \pfr \\ 1 & \text{ if } f\notin \pfr.  \end{cases}
\end{equation}
the {\em trivial map}.

\begin{rem}\label{RedTrivRemark}
It follows at once from the definitions that the trivial map is a one-sided inverse of the reduction map, in the sense that $\pi\circ\triv$ is equal to the identity map on $\Spec(A)$.  In particular, $\triv$ is injective and $\pi$ is surjective.  On the other hand, for each $\zeta\in\Mcal(A)$, we have $\triv(\pi(\zeta))=\zeta$ if and only if the seminorm $[\cdot]_\zeta$ only takes the values $0$ and $1$.
\end{rem}

\begin{prop}
Let $A$ be a $k$-algebra.  
\begin{itemize}
	\item[{\sc (A)}]  (Berkovich \cite{MR1070709} Cor. 2.4.2)  If $A$ is Noetherian, then the reduction map $\pi:\Mcal(A)\to\Spec(A)$ is anti-continuous in the sense that the inverse image of every closed (resp. open) subset of $\Spec(A)$ is open (resp. closed) in $\Mcal(A)$.
	\item[{\sc (B)}]  The trivial map $\triv:\Spec(A)\to\Mcal(A)$ is closed. 
\end{itemize}  
\end{prop}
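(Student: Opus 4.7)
The plan is to handle (A) and (B) separately, in each case using the subbase (\ref{BerkBase}) for the topology on $\Mcal(A)$ as the common tool.

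For (A), every closed subset of $\Spec(A)$ has the form $V(I)=\{\pfr\mid I\subseteq\pfr\}$ for an ideal $I\subseteq A$. Using the Noetherian hypothesis, write $I=(f_1,\dots,f_n)$. Since $[\cdot]_\zeta$ takes values in $[0,1]$ by property (i), the definition (\ref{ReductionDef}) of $\pi$ gives
\[
\pi^{-1}(V(I)) \;=\; \{\zeta\in\Mcal(A)\mid [f_i]_\zeta<1\text{ for all }i\} \;=\; \bigcap_{i=1}^{n} U_{-1,1}(f_i),
\]
a finite intersection of subbase open sets, hence open. This yields anti-continuity.

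For (B), the strategy is to realize $\triv(V(I))$ as an intersection of closed sets in $\Mcal(A)$. First, by Remark~\ref{RedTrivRemark}, a point $\zeta\in\Mcal(A)$ lies in $\triv(\Spec(A))$ exactly when $[\cdot]_\zeta$ takes only the values $0$ and $1$, so
\[
\triv(\Spec(A)) \;=\; \bigcap_{f\in A}\bigl(\Mcal(A)\setminus U_{0,1}(f)\bigr)
\]
is closed. Next, for an arbitrary ideal $I\subseteq A$, we claim
\[
\triv(V(I)) \;=\; \triv(\Spec(A))\;\cap\;\bigcap_{f\in I}\{\zeta\in\Mcal(A)\mid [f]_\zeta=0\}.
\]
The inclusion $\subseteq$ is immediate from (\ref{TrivialDef}); conversely, any $\zeta$ on the right is trivially valued and therefore equal to $\triv(\pi(\zeta))$, and the condition $[f]_\zeta=0$ for each $f\in I$ forces $I\subseteq\pi(\zeta)$, so $\pi(\zeta)\in V(I)$. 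Each set $\{\zeta\mid [f]_\zeta=0\}$ is the complement of the subbase open set $U_{0,2}(f)$ and hence closed, so $\triv(V(I))$ is closed. Since every closed subset of $\Spec(A)$ has the form $V(I)$, this shows $\triv$ is a closed map.

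The proof presents no real technical obstacle. The only subtlety is recognizing the trivially valued points as the set cut out by the condition ``$[f]_\zeta\notin(0,1)$ for every $f\in A$'', which depends on the $[0,1]$-boundedness encoded in property (i) of a bounded multiplicative seminorm.
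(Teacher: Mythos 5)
Your argument for part (A) is correct and essentially identical to the paper's: both reduce a closed set $V(I)$ to $V(f_1,\dots,f_n)$ via the Noetherian hypothesis and then write $\pi^{-1}(V(I))$ as a finite intersection of subbase open sets of the form $\{\zeta\mid[f_j]_\zeta<1\}$.

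For part (B), however, you take a genuinely different and arguably cleaner route. The paper proves $\triv(V(\afr))$ is closed by a net-convergence argument: take a net $\zeta_i\to\zeta$ with each $\zeta_i=\triv(\pfr_i)$ for $\pfr_i\in V(\afr)$, use pointwise convergence $[f]_{\zeta_i}\to[f]_\zeta$ to conclude that $[\cdot]_\zeta$ still takes only the values $0$ and $1$ (so $\zeta$ is in the image of $\triv$), and then that $[f]_\zeta=0$ for $f\in\afr$ (so $\pi(\zeta)\in V(\afr)$). You instead exhibit $\triv(V(I))$ directly as an intersection of closed sets:
\begin{equation*}
\triv(V(I))=\Bigl(\bigcap_{f\in A}\bigl(\Mcal(A)\setminus U_{0,1}(f)\bigr)\Bigr)\cap\bigcap_{f\in I}\bigl(\Mcal(A)\setminus U_{0,2}(f)\bigr),
\end{equation*}
using property (i) to identify $\Mcal(A)\setminus U_{0,2}(f)=\{\zeta\mid[f]_\zeta=0\}$ and Remark~\ref{RedTrivRemark} to identify the first big intersection with $\triv(\Spec(A))$. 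Both set inclusions you assert are correct, and the decomposition does not require the Noetherian hypothesis (nor does the paper's net argument). Your version is more elementary in that it avoids nets and makes the closedness manifest directly from the subbase $(\ref{BerkBase})$; the paper's version is a standard sequential/net-style continuity argument and requires noticing that the set $\{0,1\}$ is closed in $\RR$ so the limit of a convergent net of trivial values is again a trivial value. Both are valid, and what the paper saves in having to introduce complements of subbase sets it spends on net machinery, so neither is decisively shorter.
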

\begin{proof}
Consider an arbitrary  closed subset $V(\afr)=\{\pfr\mid\afr\subseteq\pfr\}$ of $\Spec(A)$, where $\afr$ is an ideal of $A$.  If $A$ is Noetherian then $\afr=(f_1,\dots,f_n)$ for some finite collection $f_1,\dots, f_n\in A$.  It follows that
\begin{equation*}
\begin{split}
\pi^{-1}(V(\afr)) & = \{\zeta\in\Mcal(A)\mid \pi(\zeta)\in V(\afr)\} \\
 & = \{\zeta\in\Mcal(A)\mid \afr\subseteq\{f\in A\mid[f]_\zeta<1\}\} \\
 & = \cap_{f\in\afr}\{\zeta\in\Mcal(A)\mid [f]_\zeta<1\} \\
 & = \cap_{1\leq j\leq n}\{\zeta\in\Mcal(A)\mid [f_j]_\zeta<1\}.
\end{split} 
\end{equation*}
Each set $\{\zeta\in\Mcal(A)\mid [f_j]_\zeta<1\}$ is open by the definition of the topology on $\Mcal(A)$, and therefore $\pi^{-1}(V(\afr))$ is open.

To show that the trivial map is closed, let $\langle\zeta_i\rangle$ be a net in $\triv(V(\afr))$ such that $\zeta_i\to\zeta$ in $\Mcal(A)$, thus for each index $i$ we may write $\zeta_i=\triv(\pfr_i)$ for $\pfr_i\in V(\afr)$.  Since for each $f\in A$ we have $[f]_{\zeta_i}\to[f]_\zeta$, and since each seminorm $[\cdot]_{\zeta_i}$ only takes the values $0$ and $1$, the same is true of $[\cdot]_{\zeta}$; in particular, by Remark~\ref{RedTrivRemark} it follows that $\zeta=\triv(\pfr)$ where $\pfr=\pi(\zeta)$.  If $f\in\afr$, then $f\in\pfr_i$ for all $i$, and so $[f]_{\zeta_i}=0$ for all $i$, implying that $[f]_\zeta=0$ and thus $f\in\pfr$.  We have shown that $\afr\subseteq \pfr$, so $\pfr\in V(\afr)$, and consequently $\zeta=\triv(\pfr)\in \triv(V(\afr))$.
\end{proof}

\begin{prop}\label{ContLift}
Let $A$ and $B$ be $k$-algebras and let $\phi:\Spec(A)\to\Spec(B)$ be a morphism of affine $k$-schemes.  There exists a continuous map $T_\phi:\Mcal(A)\to\Mcal(B)$ such that $\pi_B\circ T_\phi=\phi\circ\pi_A$ and $T_\phi\circ\triv_A=\triv_B\circ\phi$.
\begin{equation*}
\begin{CD}
\Mcal(A)  @> T_\phi >>  \Mcal(B) \\ 
@V \pi_{A} VV                                    @VV \pi_{B} V \\ 
\Spec(A)           @> \phi >>      \Spec(B)
\end{CD} \hskip1cm
\begin{CD}
\Mcal(A)  @> T_\phi >>  \Mcal(B) \\ 
@A \triv_{A} AA                                    @AA \triv_{B} A \\ 
\Spec(A)           @> \phi >>      \Spec(B)
\end{CD} 
\end{equation*}
\end{prop}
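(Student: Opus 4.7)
The plan is to construct $T_\phi$ as the pullback of seminorms along the $k$-algebra homomorphism $\phi^{*}\colon B\to A$ that corresponds to the scheme morphism $\phi$. Concretely, for each $\zeta\in\Mcal(A)$ I would define $T_\phi(\zeta)$ to be the seminorm on $B$ given by
\begin{equation*}
[g]_{T_\phi(\zeta)} \,=\, [\phi^{*}(g)]_\zeta \qquad (g\in B).
\end{equation*}
The first step is to check that $T_\phi(\zeta)$ really is a bounded multiplicative seminorm on $B$. Each of the four defining axioms (boundedness, triviality on $k$, the nonarchimedean triangle inequality, and multiplicativity) is a direct transport of the corresponding axiom for $[\cdot]_\zeta$ via the fact that $\phi^{*}$ is a $k$-algebra homomorphism; axiom (ii) in particular uses that $\phi^{*}$ restricts to the identity on $k$.

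Next I would verify continuity. By definition the topology on $\Mcal(B)$ is generated by the subbasic opens $U_{s,t}(g)$ defined in $(\ref{BerkBase})$, so it suffices to unwind
\begin{equation*}
T_\phi^{-1}(U_{s,t}(g)) \,=\, \{\zeta\in\Mcal(A)\mid s<[\phi^{*}(g)]_\zeta<t\} \,=\, U_{s,t}(\phi^{*}(g)),
\end{equation*}
which is open in $\Mcal(A)$.

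Finally I would check the two commutative diagrams. For the reduction square, $\pi_B(T_\phi(\zeta))=\{g\in B\mid [\phi^{*}(g)]_\zeta<1\}=(\phi^{*})^{-1}(\pi_A(\zeta))$, and this equals $\phi(\pi_A(\zeta))$ because by definition $\phi$ acts on $\Spec$ by contraction of prime ideals along $\phi^{*}$. For the trivial-map square, $T_\phi(\triv_A(\pfr))$ assigns to $g\in B$ the value $0$ if $\phi^{*}(g)\in\pfr$ and $1$ otherwise; since $\phi^{*}(g)\in\pfr$ is equivalent to $g\in\phi(\pfr)$, this is precisely the seminorm $\triv_B(\phi(\pfr))$ given by $(\ref{TrivialDef})$.

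There is no serious obstacle in this argument: the proposition is essentially the statement that $A\mapsto\Mcal(A)$ is a contravariant functor compatible with the reduction and trivial maps, and each step is a formal manipulation of the defining properties. The one point worth highlighting is simply the recognition that pullback along a $k$-algebra homomorphism preserves the class of bounded multiplicative seminorms — once that is in hand, continuity and compatibility fall out directly from the defining sub-basis and from Remark~\ref{RedTrivRemark}.
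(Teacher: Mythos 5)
Your proposal is correct and takes essentially the same approach as the paper: define $T_\phi$ by pullback of seminorms along the ring map $\phi^*=\Phi$, verify the axioms by transport, check continuity via the identity $T_\phi^{-1}(U_{s,t}(g))=U_{s,t}(\Phi(g))$, and verify the two squares by unwinding definitions (using Remark~\ref{RedTrivRemark} for the trivial-map square). Nothing of substance differs.
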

\begin{proof}
Denote by $\Phi:B\to A$ the $k$-algebra homomorphism obtained from viewing $A$ and $B$ as the rings of global sections on $\Spec(A)$ and $\Spec(B)$, respectively; thus $\phi(\pfr)=\Phi^{-1}(\pfr)$ for each $\pfr\in\Spec(A)$.  For each $\zeta\in\Mcal(A)$, define $[\cdot]_{T_\phi(\zeta)}:B\to\RR$ by $[f]_{T_\phi(\zeta)}=[\Phi(f)]_\zeta$.  Routine calculations verify that $[\cdot]_{T_\phi(\zeta)}$ is a bounded multiplicative seminorm on $B$, and we obtain a map $T_\phi:\Mcal(A)\to\Mcal(B)$.  Given $\zeta\in\Mcal(A)$, we have
\begin{equation*}
\begin{split}
\pi_B(T_\phi(\zeta)) & = \{g\in B\mid[g]_{T_\phi(\zeta)}<1\} \\
	& = \{g\in B\mid[\Phi(g)]_{\zeta}<1\} \\
	& = \Phi^{-1}(\{f\in A\mid[f]_{\zeta}<1\}) \\
	& = \phi(\pi_A(\zeta)).
\end{split} 
\end{equation*}
Given $\pfr\in\Spec(A)$, both of the seminorms $[\cdot]_{\triv_B(\phi(\pfr))}$ and $[\Phi(\cdot)]_{\triv_A(\pfr)}$ on $B$ only take the values $0$ and $1$.  Therefore for each $g\in B$ we have
\begin{equation*}
\begin{split}
[g]_{\triv_B\circ\phi(\pfr)} = 0 & \Leftrightarrow g\in\phi(\pfr)=\Phi^{-1}(\pfr) \\
	& \Leftrightarrow \Phi(g)\in \pfr  \\
	& \Leftrightarrow [\Phi(g)]_{\triv_A(\pfr)} = 0  \\
	& \Leftrightarrow [g]_{T_\phi(\triv_A(\pfr))} = 0,
\end{split} 
\end{equation*}
and we conclude that $\triv_B(\phi(\pfr))=T_\phi(\triv_A(\pfr))$.

To show that $T_\phi$ is continuous, it suffices to show that $T_\phi^{-1}(U_{s,t}(f))$ is an open subset of $\Mcal(A)$ for each $s,t\in\RR$ and $f\in B$, because sets of the form $U_{s,t}(f)$ (defined in $(\ref{BerkBase})$) generate the topology on $\Mcal(B)$.  This follows from the easily checked identity $T_\phi^{-1}(U_{s,t}(f))=U_{s,t}(\Phi(f))$.
\end{proof}


\section{The Proofs of Theorems \ref{MainThmIntro} and \ref{MainCorIntro}}

\begin{rem}\label{CountableRemark}
In order to prove Theorem~\ref{MainThmIntro} we may assume without loss of generality that the base field $k$ is either finite or countably infinite.  For suppose that $k$ is arbitrary and let $A=k[t_1,\dots, t_r]/I$ be a finitely generated $k$-algebra, where  $I$ is an ideal of $k[t_1,\dots, t_r]$, let $\phi:\Spec(A)\to \Spec(A)$ be a morphism, let $\alpha\in\Spec(A)$, and let $V=V(\afr)$ be an arbitrary Zariski-closed subset of $\Spec(A)$, where $\afr$ is an ideal of $A$.  Let $\Phi:A\to A$ be the $k$-algebra homomorphism obtained from viewing $A$ as the ring of global sections on $\Spec(A)$; thus $\phi(\pfr)=\Phi^{-1}(\pfr)$ for each $\pfr\in\Spec(A)$.  Let $k_0$ be a subfield of $k$ which, on the one hand is finitely generated over the prime subfield of $k$, but which on the other hand is large enough so that all coefficients of $\Phi$ and all generators of $I$, $\alpha$ (viewed as a prime ideal of $A$), and $\afr$ are elements of the image of $k_0[t_1,\dots,t_r]$ under the quotient map $k[t_1,\dots,t_r]\to A$.  Letting $I_0=I\cap k_0[t_1,\dots, t_r]$, we may view $A_0=k_0[t_1,\dots, t_r]/I_0$ as a $k_0$-subalgebra of $A$.  Thus $\Phi:A\to A$ restricts to a map $\Phi_0:A_0\to A_0$ and we obtain a morphism $\phi_0:\Spec(A_0)\to \Spec(A_0)$ defined by $\phi_0(\pfr)=\Phi_0^{-1}(\pfr)$.  Define $\alpha_0=\alpha\cap A_0$ and $\afr_0=\afr\cap A_0$.  A straightforward calculation (using the fact that inverse image commutes with intersection) shows that $\phi_0(\alpha_0)\in V(\afr_0)$ if and only if $\phi(\alpha)\in V(\afr)$, and more generally that 
\begin{equation*}
\{n\in\NN\mid\phi_0^n(\alpha_0)\in V(\afr_0)\}=\{n\in\NN\mid\phi^n(\alpha)\in V(\afr)\}.
\end{equation*}
\end{rem}

\begin{proof}[Proof of Theorem~\ref{MainThmIntro}]  As discussed in Remark~\ref{CountableRemark}, we may assume without loss of generality that $X=\Spec(A)$ for a finitely generated $k$-algebra $A$ over a countable field $k$.  Letting $\Xbf=\Mcal(A)$, it follows from Proposition~\ref{CompProp} that $\Xbf$ is a separable metrizable space.  As constructed in $\S$~\ref{BerkSpectSect} we denote by $\pi:\Xbf\to X$ the (anti-continuous) reduction map, and by $\triv:X\to \Xbf$ the (closed) trivial map.

Given a morphism $\phi: X\to X$, a point $\alpha\in X$, and a Zariski-closed subset $V$ of $X$, assume that the set $\{n\in\NN\mid\phi^n(\alpha)\in V\}$ has positive Banach density; we must show that this set contains an infinite arithmetic progression.  Denote by $T=T_\phi:\Xbf\to\Xbf$ the continuous lift of $\phi:X\to X$ constructed in Proposition~\ref{ContLift}.

The positive Banach density hypothesis implies that there exists a sequence $\{I_m\}$ of intervals in $\NN$ with $|I_m|\to+\infty$ and 
\begin{equation}\label{Epsilon}
\lim_{m\to+\infty}\frac{|\{n\in\NN\mid \phi^n(\alpha)\in V\}\cap I_m|}{|I_m|}>0.
\end{equation}
Define a sequence $\{\mu_m\}$ of unit Borel measures on $\Xbf$ by 
\begin{equation*}
\mu_m=\frac{1}{|I_m|}\sum_{n\in I_m}\delta_{T^n(\triv(\alpha))},
\end{equation*}
where $\delta_x$ denotes the Dirac measure supported at the point $x\in\Xbf$.  By Prokhorov's theorem (\cite{MR1700749} Thm 5.1), passing to a subsequence we may assume without loss of generality that the sequence $\{\mu_m\}$ converges weakly to a unit Borel measure $\mu$ on $\Xbf$.

\medskip

{\em Claim 1:  $\mu$ is $T$-invariant.}  

\medskip

{\em Claim 2: $\mu(\pi^{-1}(V))>0$.}  

\medskip

Postponing the proofs of the two claims, we will now complete the proof of Theorem~\ref{MainThmIntro}.  

The Poincar\'e Recurrence Theorem (see $\S$~\ref{TopSect}) implies that $\mu$-almost all points of $\Xbf$ are $T$-recurrent, and since $\mu(\pi^{-1}(V))>0$, there exists a $T$-recurrent point $\zeta\in\pi^{-1}(V)\cap\supp(\mu)$.  Further, $\zeta$ must be a limit point of the forward $T$-orbit of $\triv(\alpha)$.  For if $U$ is an open subset of $\Xbf$ containing $\zeta$, then $\liminf\mu_m(U)\geq\mu(U)>0$ by Remark~\ref{PortRemark} and the fact that $\zeta\in\supp(\mu)$; it follows that $T^n(\triv(\alpha))\in U$ for some (in fact infinitely many) $n\geq1$.

Since $\zeta$ is $T$-recurrent, the open neighborhood $\pi^{-1}(\overline{\{\pi(\zeta)\}})$ of $\zeta$ in $\Xbf$ contains $T^a(\zeta)$ for some $a\geq1$.  In other words 
\begin{equation*}
\phi^a(\pi(\zeta))=\pi(T^a(\zeta))\in \overline{\{\pi(\zeta)\}},
\end{equation*}
and in particluar this implies that 
\begin{equation*}
\phi^a(\overline{\{\pi(\zeta)\}})\subseteq\overline{\{\phi^a(\pi(\zeta))\}}\subseteq\overline{\{\pi(\zeta)\}}.
\end{equation*}
Since $\zeta$ is a limit point of the forward $T$-orbit of $\triv(\alpha)$, it follows that the open neighborhood $\pi^{-1}(\overline{\{\pi(\zeta)\}})$ of $\zeta$ contains $T^b(\triv(\alpha))$ for some integer $b\geq0$.  We conclude that 
\begin{equation*}
\phi^b(\alpha)=\phi^b(\pi(\triv(\alpha)))=\pi(T^b(\triv(\alpha)))\in \overline{\{\pi(\zeta)\}}
\end{equation*} 
and therefore
\begin{equation*}
\phi^{a\ell+b}(\alpha) = (\phi^a)^{\ell}(\phi^b(\alpha))\in\overline{\{\pi(\zeta)\}}\subseteq V
\end{equation*}
for all $\ell\in\NN$, verifying that the set $\{n\in\NN\mid\phi^n(\alpha)\in V\}$ contains the infinite arithmetic progression $a\NN+b$, and completing the proof of Theorem~\ref{MainThmIntro}.  It now remains only to give the proofs of the two claims.

\medskip

{\em Proof of Claim 1:}  Consider an arbitrary continuous function $F:\Xbf\to\RR$.  Then for each $m\geq1$ we have
\begin{equation*}
\int (F\circ T) d\mu - \int Fd\mu = A_m+B_m+C_m 
\end{equation*}
where
\begin{equation*}
\begin{split}
A_m & = \int (F\circ T)d(\mu-\mu_{m}) \\
B_m & = \int Fd(\mu_{m}-\mu) \\
C_m & = \int ((F\circ T)-F)d\mu_{m}.
\end{split} 
\end{equation*}
The weak convergence $\mu_{m}\to\mu$ implies that $A_{m}\to0$ and $B_{m}\to0$, and writing $I_{m}=\{a_m,a_m+1,\dots,b_m\}$, we have
\begin{equation*}
\begin{split}
|C_{m}| & = \bigg|\frac{1}{|I_{m}|}\sum_{n\in I_{m}}(F(T^{n+1}(\triv(\alpha)))-F(T^n(\triv(\alpha))))\bigg| \\
	& = \bigg|\frac{1}{|I_{m}|}(F(T^{b_{m}+1}(\triv(\alpha)))-F(T^{a_{m}}(\triv(\alpha))))\bigg| \\
	& \leq \frac{2}{|I_{m}|}\sup_{x\in\Xbf}|F(x)|.
\end{split} 
\end{equation*}
Since $|I_m|\to+\infty$ we have $C_{m}\to0$, and so $\int (F\circ T) d\mu=\int Fd\mu$, completing the proof that $\mu$ is $T$-invariant.

\medskip

{\em Proof of Claim 2:}  Since the closed set $\triv(V)$ is a subset of the open set $\pi^{-1}(V)$, Urysohn's lemma (\cite{MR1681462} Thm 4.15) implies that there exists a continuous function $G:\Xbf\to[0,1]$ such that $G(x)=0$ for all $x\in\Xbf\setminus\pi^{-1}(V)$ and $G(x)=1$ for all $x\in\triv(V)$.  By the weak convergence $\mu_{m}\to\mu$ and $(\ref{Epsilon})$ we have
\begin{equation}\label{GIneq1}
\begin{split}
\int G d\mu & = \lim_{m\to+\infty}\frac{1}{|I_m|}\sum_{n\in I_m}G(T^n(\triv(\alpha))) \\ 
	& = \lim_{m\to+\infty}\frac{1}{|I_m|}\sum_{n\in I_m}G(\triv(\phi^n(\alpha))) \\
	& \geq  \lim_{m\to+\infty}\frac{|\{n\in\NN\mid \phi^n(\alpha)\in V\}\cap I_m|}{|I_m|} >0.
\end{split} 
\end{equation}
On the other hand $\int G d\mu \leq \int_{\pi^{-1}(V)}d\mu=\mu(\pi^{-1}(V))$, and combining this with $(\ref{GIneq1})$ establishes Claim 2.
\end{proof}

\begin{proof}[Proof of Theorem~\ref{MainCorIntro}]

For the purposes of this proof, let us call a subset of $\NN$ {\em asymptotically periodic} (AP) if it is equal to the union of a finite (possibly empty) collection of infinite arithmetic progressions together with a set of Banach density zero.  Observe that: (i) finite sets are AP; (ii) finite unions of AP sets are AP; (iii) if $S$ is AP, then so is $\{a\ell+b\mid \ell\in S\}$ for each $a\geq1$, $b\geq0$.

Given an affine $k$-variety, a morphism $\phi: X\to X$, a point $\alpha\in X$, and a Zariski-closed subset $V$ of $X$, we must show that $\{n\in\NN\mid\phi^n(\alpha)\in V\}$ is AP.  The proof will use induction on $\dim(V)$; in the zero-dimensional case $V$ is a finite set and the result is trivial.  Suppose that $\dim(V)\geq1$ and assume that the theorem is true for Zariski-closed subsets of $X$ of dimension less than $\dim(V)$.  If $V_1,\dots,V_r$ are the irreducible components of $V$, then 
\begin{equation*}
\{n\in\NN\mid\phi^n(\alpha)\in V\}=\cup_j\{n\in\NN\mid\phi^n(\alpha)\in V_j\},
\end{equation*}
and so we may assume without loss of generality that $V$ is irreducible.

We may assume that there exists an infinite arithmetic progression $a\NN+b$ (for $a\geq1$ and $b\geq0$) such that 
\begin{equation}\label{ContainsProg}
\phi^{n}(\alpha)\in V \text{ for all }n\in a\NN+b,
\end{equation}
because otherwise we are in the situation of Theorem~\ref{MainThmIntro}, which we have already proved.  

For each integer $j\geq0$, define
\begin{equation*}
W_j=\overline{\{\phi^n(\alpha)\mid n\in a\NN+j\}}.
\end{equation*}
In particular, we have $W_b\subseteq V$ by $(\ref{ContainsProg})$ and the fact that $V$ is Zariski-closed.  For each $j\geq0$ we have 
\begin{equation*}
\begin{split}
\phi(W_j) & =\phi(\overline{\{\phi^n(\alpha)\mid n\in a\NN+j\}}) \\
	& \subseteq \overline{\phi(\{\phi^n(\alpha)\mid n\in a\NN+j\})} \\
	& = \overline{\{\phi^n(\alpha)\mid n\in a\NN+j+1\}} \\
	& = W_{j+1}.
\end{split}
\end{equation*}
Thus for each $j\geq0$ the morphism $\phi$ restricts to a map $\phi_j:W_j\to W_{j+1}$.  Moreover, each map $\phi_j$ is dominant, since $W_{j+1}$ is defined to be the Zariski-closure of a set of points in the $\phi$-image of $W_j$.  A consequence is that the sequence $\{\dim(W_j)\}$ is nonincreasing, and therefore must stabilize.  Possibly replacing $b$ with a larger element of its congruence class modulo $m$, we may assume without loss of generality that $\dim(W_j)=\dim(W_b)$ for all $j\geq b$.

Each integer $n\geq b$ is contained in one of the $a$ infinite arithmetic progressions $a\NN+j$ for $b\leq j\leq b+a-1$, and therefore
\begin{equation*}
\{n\in\NN\mid\phi^n(\alpha)\in V\}= S\cup S_b\cup S_{b+1}\cup\dots\cup S_{b+a-1}
\end{equation*}
where $S$ is a subset of $\{0,1,\dots,b-1\}$ and
\begin{equation*}
S_j=\{n\in a\NN+j\mid\phi^n(\alpha)\in V\cap W_j\}.
\end{equation*}
We must show that each set $S_j$ is AP.

\medskip

\underline{Case 1, $\dim(W_b)<\dim(V)$:}  Define $\psi=\phi^a$ and for each $j$ let $\beta=\phi^j(\alpha)$.  Then 
\begin{equation*}
S_j=\{a\ell+j\mid\ell\in\NN\text{ and }\psi^\ell(\beta)\in V\cap W_j\}
\end{equation*}
is AP by the induction hypothesis and the fact that 
\begin{equation*}
\dim(V\cap W_j)\leq\dim(W_j)=\dim(W_b)<\dim(V).
\end{equation*}

\medskip

\underline{Case 2, $\dim(W_b)=\dim(V)$:}  Since $W_b\subseteq V$ and $V$ is irreducible, we must have $W_b=V$.  Since $W_b$ is irreducible, it follows from the existence of the dominant maps $\phi_j:W_j\to W_{j+1}$ that $W_j$ is irreducible for all $j\geq b$.  Given $b\leq j\leq b+a-1$, we must have either $\dim(V\cap W_j)<\dim(V)$ or $\dim(V\cap W_j)=\dim(V)$.  In the former case, $S_j$ is AP by the induction hypothesis and the same argument used in Case 1.  In the latter case, we have
\begin{equation*}
\dim(V\cap W_j)=\dim(V)=\dim(W_b)=\dim(W_j)
\end{equation*}
and we conclude that $W_j=V$, since $W_j$ and $V$ are both irreducible.  It follows from the identity $W_j=V$ that $\phi^n(\alpha)\in V$ for all $n\in a\NN+j$, and we conclude that $S_j=a\NN+j$, which is AP.  
\end{proof}


\begin{thebibliography}{10}

\bibitem{MR2225492}
{\sc J.~P. Bell}, {\em A generalised {S}kolem-{M}ahler-{L}ech theorem for
  affine varieties}, J. London Math. Soc. (2), 73 (2006), pp.~367--379.

\bibitem{MR2766180}
{\sc J.~P. Bell, D.~Ghioca, and T.~J. Tucker}, {\em The dynamical
  {M}ordell-{L}ang problem for \'etale maps}, Amer. J. Math., 132 (2010),
  pp.~1655--1675.
  
\bibitem{BellGhiocaTucker2014}
{\sc J.~P. Bell, D.~Ghioca, and T.~J. Tucker}, {\em The dynamical
  {M}ordell-{L}ang problem}, preprint (2014), http://arxiv.org/pdf/1401.6659v1.pdf
  
\bibitem{MR1070709}
{\sc V.~G. Berkovich}, {\em Spectral {T}heory and {A}nalytic {G}eometry over
  {N}on-{A}rchimedean {F}ields}, vol.~33 of Mathematical Surveys and
  Monographs, American Mathematical Society, Providence, RI, 1990.

\bibitem{MR919425}
{\sc J.-P. B{\'e}zivin}, {\em Suites r\'ecurrentes lin\'eaires en
  caract\'eristique non nulle}, Bull. Soc. Math. France, 115 (1987),
  pp.~227--239.

\bibitem{MR1700749}
{\sc P.~Billingsley}, {\em Convergence of probability measures}, Wiley Series
  in Probability and Statistics: Probability and Statistics, John Wiley \& Sons
  Inc., New York, second~ed., 1999.
\newblock A Wiley-Interscience Publication.

\bibitem{MR1259107}
{\sc L.~Denis}, {\em G\'eom\'etrie et suites r\'ecurrentes}, Bull. Soc. Math.
  France, 122 (1994), pp.~13--27.
  
\bibitem{FavreThesis}
{\sc C.~Favre}, {\em Dynamique des applications rationnelles}, Ph.D. Thesis, Universit\'e Paris-Sud, (2000).

\bibitem{MR1681462}
{\sc G.~B. Folland}, {\em Real analysis}, Pure and Applied Mathematics (New
  York), John Wiley \& Sons Inc., New York, second~ed., 1999.
\newblock Modern techniques and their applications, A Wiley-Interscience
  Publication.

\bibitem{MR0498471}
{\sc H.~Furstenberg}, {\em Ergodic behavior of diagonal measures and a theorem
  of {S}zemer\'edi on arithmetic progressions}, J. Analyse Math., 31 (1977),
  pp.~204--256.

\bibitem{MR628658}
{\sc H.~Furstenberg}, {\em Poincar\'e
  recurrence and number theory}, Bull. Amer. Math. Soc. (N.S.), 5 (1981),
  pp.~211--234.

\bibitem{MR2521481}
{\sc D.~Ghioca and T.~J. Tucker}, {\em Periodic points, linearizing maps, and
  the dynamical {M}ordell-{L}ang problem}, J. Number Theory, 129 (2009),
  pp.~1392--1403.

\bibitem{MR2367026}
{\sc D.~Ghioca, T.~J. Tucker, and M.~E. Zieve}, {\em Intersections of
  polynomials orbits, and a dynamical {M}ordell-{L}ang conjecture}, Invent.
  Math., 171 (2008), pp.~463--483.
  
\bibitem{GignacPaper}
{\sc W.~Gignac}, {\em Measures and dynamics on Noetherian spaces}, J. Geom. Anal., to appear, http://link.springer.com/article/10.1007/s12220-013-9394-9
  
\bibitem{GignacThesis}
{\sc W.~Gignac}, {\em Equidistribution of Preimages in Nonarchimedean Dynamics}, Ph.D. Thesis, University of Michigan (2013), http://hdl.handle.net/2027.42/99809

\bibitem{MR648108}
{\sc P.~Walters}, {\em An introduction to ergodic theory}, vol.~79 of Graduate
  Texts in Mathematics, Springer-Verlag, New York, 1982.

\end{thebibliography}
\end{document}